\theoremstyle{plain}
\newtheorem{theorem}{Theorem}
\newtheorem{prop}{Proposition}
\newtheorem{lemma}{Lemma}
\theoremstyle{definition}
\theoremstyle{remark}
\numberwithin{equation}{section}
\renewcommand{\d}{\mathrm{d}}
\newcommand{\ZZ}{\mathbb{Z}}
\newcommand{\RR}{\mathbb{R}}
  \def\d{\,\mathrm{d}}
\def\div{\,|\,}  
\def\abs#1{\left|#1\right|} \def\Res{\mathop{\rm Res}\limits}
\def\h{\textstyle{\frac{1}{2}}}
 \def\be{\beta} \def\ep{\epsilon}
\def\br#1{\left(#1\right)}
\def\url#1{{\tt #1}} \def\be{\begin{equation}} \def\ee{\end{equation}}
\def\ba{\begin{aligned}} \def\ea{\end{aligned}}
\def\r{\mathcal{R}}   \def\A{\mathcal{A}} \def\U{\mathcal{U}}   \def\I{\mathcal{I}}
\def\Int{\int\limits} \def\inv#1{\frac{1}{#1}}
\begin{document}

\title[]%
{On a question of A.~Schinzel: Omega estimates for a special type of
arithmetic functions}
\author[]%
{Manfred K\"{U}HLEITNER and Werner Georg NOWAK}

\newcommand{\acr}{\newline\indent}

\address{\llap{\,}Institute of Mathematics\acr Department of Integrative
Biology\acr BOKU Wien\acr 1180 Vienna, AUSTRIA}
\email{kleitner@boku.ac.at,\quad nowak@boku.ac.at}

\thanks{The authors gratefully
acknowledge support from the Austrian Science Fund (FWF) under
project Nr.~P20847-N18.}

\subjclass[2010]{11N37, 11A05, 11A25, 11L07.} \keywords{arithmetic
functions, asymptotic formulas, Omega estimates}

\begin{abstract} The paper deals with lower bounds for the remainder term in
asymptotics for a certain class of arithmetic functions. Typically,
these are generated by a Dirichlet series of the form
$\zeta^2(s)\zeta(2s-1)(\zeta(2s))^M\,H(s)$ where $M$ is an arbitrary
integer and $H(s)$ has an Euler product which converges absolutely
for $\Re s>\inv{3}$.
\end{abstract}


\maketitle

\centerline{\textit{To Professor Andrzej Schinzel on his 75th
birthday}}

\vskip1cm 

\section{Introduction} \subsection{} This article is concerned with a special
class $\frak C$ of arithmetic functions $f_H$ with a generating
Dirichlet series\footnote{The dependance of $f_H$ on the integer $M$
will be suppressed in notation throughout.} \be\label{basic}
F_H(s)=\sum_{n=1}^\infty \frac{f_H(n)}{n^s} =
\zeta^2(s)\zeta(2s-1)(\zeta(2s))^M\,H(s)\qquad(\Re s>1)\,, \ee where
$M$ is an integer, and $H(s)$ has an Euler product which converges
absolutely for $\Re s>\inv{3}$.

We mention some examples of special arithmetic interest: Firstly,
the function \begin{equation}\label{f_star}
    f^*(n):=\sum_{m\div n} \gcd\br{m,\frac{n}{m}}
\end{equation} (see
N.~Sloane \cite{Sloane_A055155}) in a way quantifies the property of
$n$ to be not square-free, i.e., to possess non-unitary divisors.
(For $n$ square-free, $f^*(n)$ coincides with the number-of-divisors
function $d(n)$.)  $f^*(n)$ is generated by the Dirichlet series
\begin{equation}
\sum_{n=1}^\infty \frac{f^*(n)}{n^s} =
\frac{\zeta^2(s)\zeta(2s-1)}{\zeta(2s)}\qquad\br{\Re(s)>1}\,.
\end{equation} This is \eqref{basic} with $M=-1$, $H(s)=1$ identically.

Secondly, consider \begin{equation}\label{}
    f_1(n)=\sum_{m\div n}\sigma\br{\gcd\br{m,\frac{n}{m}}}\,,
\end{equation}where $\sigma$ denotes the sum-of-divisors function:
cf.~again N.~Sloane \cite{Sloane_A124316}. The corresponding
generating function simply reads \be \label{F_one} F_1(s) =
\sum_{n=1}^\infty \frac{f_1(n)}{n^s} = \zeta^2(s)\zeta(2s-1)
\qquad(\Re s>1)\,.\ee This is \eqref{basic} with $M=0$, $H(s)=1$
identically.

As a third example, we mention the {\it modified Pillai's function}
(N.~Sloane \cite{Sloane_A078430})
\begin{equation}\label{p_star}
    P^*(n):=\frac{1}{n}\sum_{k=1}^n \gcd(k^2,n)\,.
\end{equation} This is generated by \be \sum_{n=1}^\infty \frac{P^*(n)}{n^s} =
\frac{\zeta^2(s)\zeta(2s-1)}{(\zeta(2s))^2}\,H^*(s)\qquad(\Re
s>1)\,, \ee where $H^*(s)$ has an Euler product absolutely
convergent for $\Re s>\inv{3}$.

\subsection{} The class of functions $\frak C$ has been dealt with in detail in a
recent paper by E.~Kr\"{a}tzel, W.G.~Nowak, and L.~T\'oth \cite{EGL}. In
that article, the emphasis was on upper bounds for the error term
$\r_{f_H}(x)$ in the asymptotic formula \be\label{Ass_fH} \sum_{n\le
x} f_H(n) = \Res_{s=1}\br{F_H(s)\frac{x^s}{s}}+ \r_{f_H}(x)\,. \ee
Since $F_H(s)$ has a triple pole at $s=1$, explicitly
$$\Res_{s=1}\br{F_H(s)\frac{x^s}{s}}=x\,p_H(\log x)\,,$$ where $p_H$ is
a quadratic polynomial whose coefficients depend on $H$ and $M$.
Using contour integration and properties of the Riemann
zeta-function, it has been proved in \cite{EGL} that \be \r_{f_H}(x)
= O\left(x^{2/3}(\log x)^{16/9}\right)\,. \ee Employing Kr\"{a}tzel's
method \cite{EKLP}, which involves fractional part sums and the
theory of (classic) exponent pairs, the slight refinement \be
\r_{f_H}(x) = O\left(x^{925/1392}\right)\ee has been obtained
($\frac{925}{1392}=0.6645\dots$). Finally, bringing in Martin
Huxley's deep and new technique \cite{Huxbook}, \cite{HuxLP},
\cite{HuxEZ} ("Discrete Hardy-Littlewood method"), the authors of
\cite{EGL} deduced the further improvement \be \label{hux}
\r_{f_H}(x) = O\left(x^{547/832}(\log x)^{26947/8320}\right)\qquad
\br{\textstyle\frac{547}{832}=0.65745\dots}\,. \ee For the context
of the class $\frak C$ within the frame of the theory of arithmetic
functions, as well as for a wealth of enlightening related results,
see also the recent papers by L.~T\'oth \cite{TothTorino},
\cite{TothInt}.

\subsection{} The results of \cite{EGL} have been presented at the
$20^{\rm th}$ Czech and Slovak International Conference on Number
Theory in Star\'a Lesn\'a, September 2011 \cite{EGLStara}. At the
end of that talk, Professor Andrzej Schinzel raised the following
question: "What can be said concerning Omega-estimates for the
remainder term?"

The authors of the present paper are very grateful for this valuable
stimulation of further research and are pleased to be able to
provide the following answer.

\begin{theorem} For any arithmetic function $f_H\in{\frak C}$ with a
generating function $F_H(s)$ according to \eqref{basic}, it holds
true that, as $x\to\infty$, $$ \sum_{n\le x} f_H(n) =
\Res_{s=1}\br{F_H(s)\frac{x^s}{s}}+ \Omega\br{\frac{\sqrt{x}(\log
x)^2}{(\log\log x)^{|M+1|}}}\,. $$ \end{theorem}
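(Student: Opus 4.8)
Throughout put $\r(x):=\r_{f_H}(x)$. The plan is to deduce the $\Omega$-statement from a lower bound for a weighted mean square of $\r$, obtained in turn from the size of $F_H(s)$ on vertical lines just to the right of $\Re s=\tfrac12$. First the set-up: since $\zeta(2s-1)$ contributes a simple and $\zeta^2(s)$ a double pole at $s=1$, while $\zeta(2s),H(s)$ are holomorphic and non-zero there, $\Res_{s=1}\br{F_H(s)x^s/s}=x\,p_H(\log x)$ with $\deg p_H=2$; writing $\mathcal M(s):=\int_1^\infty x\,p_H(\log x)\,x^{-s-1}\,\d x$ and $\mathcal P(s):=-\int_0^1 x\,p_H(\log x)\,x^{-s-1}\,\d x$ — rational functions which on a vertical line bounded away from $s=1$ are $\ll(1+|t|)^{-3}$ — one has for $\Re s>1$
\[
\int_0^\infty\r(x)\,x^{-s-1}\,\d x=\mathcal P(s)+\frac{F_H(s)}{s}-\mathcal M(s)=:\Phi(s),
\]
and $\Phi$ continues holomorphically into the open strip $\tfrac12<\Re s<1$ (for $M\ge1$ the order-$M$ pole of $F_H$ at $s=\tfrac12$ lies on the boundary and is harmless). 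Suppose, for contradiction, that $\r(x)=o\!\br{\sqrt x\,(\log x)^2(\log\log x)^{-|M+1|}}$. Then $\r(x)\ll\sqrt x\,(\log x)^2$, so $x\mapsto\r(x)x^{-\sigma}\in L^2(\RR_+,\d x/x)$ for every $\sigma>\tfrac12$; the integral above then converges and equals $\Phi(s)$ for $\Re s>\tfrac12$, and Mellin–Plancherel gives, for $\sigma>\tfrac12$,
\[
\int_0^\infty|\r(x)|^2\,x^{-2\sigma-1}\,\d x=\frac1{2\pi}\int_{-\infty}^\infty|\Phi(\sigma+\ii t)|^2\,\d t .
\]

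The crucial step is a lower bound for the right-hand side as $\sigma=\tfrac12+\ep\to\tfrac12^+$. Since $|\Phi(\sigma+\ii t)|^2\ge\tfrac12|F_H(\sigma+\ii t)/(\sigma+\ii t)|^2-|\mathcal P(\sigma+\ii t)-\mathcal M(\sigma+\ii t)|^2$ and the $\mathcal P,\mathcal M$–terms contribute only $O(1)$ to the integral uniformly for $\sigma\in(\tfrac12,\tfrac34)$, it suffices to bound $\int|F_H(\tfrac12+\ep+\ii t)/(\tfrac12+\ep+\ii t)|^2\,\d t$ from below. On $\Re s=\tfrac12+\ep$ the factor $\zeta(2s-1)=\zeta(2\ep+2\ii t)$ has real part $2\ep\in(0,\tfrac12)$, and the functional equation $\zeta(w)=\chi(w)\zeta(1-w)$ with $|\chi(2\ep+2\ii t)|\asymp|t|^{1/2-2\ep}$ gives $|\zeta(2s-1)|^2\asymp|t|^{1-4\ep}\,|\zeta((1-2\ep)+2\ii t)|^2$ for $|t|\ge1$; hence, using $|H(\tfrac12+\ep+\ii t)|\asymp1$,
\[
\frac{|F_H(\tfrac12+\ep+\ii t)|^2}{|\tfrac12+\ep+\ii t|^2}\;\asymp\;|\zeta(\tfrac12+\ep+\ii t)|^4\;|t|^{-1-4\ep}\;\bigl|\zeta((1-2\ep)+2\ii t)\bigr|^2\,\bigl|\zeta((1+2\ep)+2\ii t)\bigr|^{2M}.
\]
The two rightmost factors are values of $\zeta$ at points reflected across $\Re s=1$ at the same height; for $\ep$ small both are $\asymp|\zeta(1+2\ii t)|$ in modulus, and the classical bounds $(\log\log|t|)^{-1}\ll|\zeta(1+2\ii t)|\ll\log\log|t|$ yield $\bigl|\zeta((1-2\ep)+2\ii t)\bigr|^2\bigl|\zeta((1+2\ep)+2\ii t)\bigr|^{2M}\asymp|\zeta(1+2\ii t)|^{2(M+1)}\gg(\log\log|t|)^{-2|M+1|}$ (for either sign of $M+1$). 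Combining this with the fourth–moment lower bound $\int_T^{2T}|\zeta(\tfrac12+\ep+\ii t)|^4\,\d t\gg T(\log T)^4$, valid uniformly while $\ep\log T\ll1$, and summing over dyadic blocks $T=2^{j}$ with $\tfrac1{10\ep}\le j\le\tfrac1\ep$, one gets $\int_{-\infty}^\infty|F_H(\tfrac12+\ep+\ii t)/(\tfrac12+\ep+\ii t)|^2\,\d t\gg\ep^{-5}\bigl(\log(1/\ep)\bigr)^{-2|M+1|}$ as $\ep\to0^+$.

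To close: the standing hypothesis also gives, for each $\delta>0$, some $X_0$ with
\[
\int_0^\infty|\r(x)|^2\,x^{-2\sigma-1}\,\d x\le C(X_0)+\delta^2\!\int_1^\infty\frac{(\log x)^4}{(\log\log x)^{2|M+1|}}\,x^{-1-2\ep}\,\d x\ll C(X_0)+\delta^2\,\ep^{-5}\bigl(\log(1/\ep)\bigr)^{-2|M+1|},\qquad \ep=\sigma-\tfrac12.
\]
Comparing with the previous estimate, dividing by $\ep^{-5}(\log(1/\ep))^{-2|M+1|}$ and letting $\ep\to0^+$ with $X_0$ (hence $C(X_0)$) fixed forces a fixed positive constant to be $\le C\delta^2$; since $\delta>0$ is arbitrary this is absurd, proving the theorem. (For $M\ge3$ there is a shortcut: the order-$M$ pole of $F_H$ at $s=\tfrac12$ contributes an explicit secondary term $c_M\sqrt x(\log x)^{M-1}$ with $c_M\neq0$, already dominating the claimed $\Omega$-bound.)

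The main obstacle is the displayed lower bound for the integral of $|F_H(\tfrac12+\ep+\ii t)/(\tfrac12+\ep+\ii t)|^2$: one must make uniform, over the dyadic range $\log T\asymp1/\ep$, both the comparison of $|\zeta((1\mp2\ep)+2\ii t)|$ with $|\zeta(1+2\ii t)|$ (which needs the size of $\zeta'/\zeta$ near, but off, the line $\Re s=1$, via the classical zero-free region) and a $\sigma$-uniform fourth moment of $\zeta(\sigma+\ii t)$ with main term $\sim cT(\log T)^4$ throughout $\ep\log T\ll1$. The argument above runs cleanly for $M\ge-1$; for $M\le-2$ the factor $\zeta(2s)^{M}=\zeta(2s)^{-|M|}$ introduces poles of $F_H$ at the points $\rho/2$ with $\zeta(\rho)=0$ in $0<\Re s<\tfrac12$, and although these do not disturb the holomorphy in the strip used above, the lower‑bound step requires a supplementary argument to recover exactly the exponent $|M+1|$ in the $(\log\log x)$–loss.
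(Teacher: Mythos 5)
Your overall strategy --- pass to a weighted mean square of $\r_f(u)$, convert it by Mellin--Plancherel into a mean square of $F_H(s)/s$ on a line $\Re s=\h+\ep$, and bound the latter from below via the functional equation and the fourth moment of $\zeta$ --- is a legitimate variant of the paper's route (which instead smooths with $e^{-n/y}$ via Perron's formula, uses the Montgomery--Vaughan mean value theorem, and works exactly on $\Re s=\h$). The contradiction set-up and the upper bound $\ll C(X_0)+\delta^2\ep^{-5}(\log(1/\ep))^{-2|M+1|}$ for the left-hand side are sound.

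There is, however, a genuine gap at the crucial lower-bound step, and it is precisely the difficulty that the paper's Lemma 1 exists to overcome. You invoke ``the classical bounds $(\log\log|t|)^{-1}\ll|\zeta(1+2\ii t)|\ll\log\log|t|$'' as if they held pointwise for all large $t$. They do not: unconditionally one only knows bounds of the shape $|\zeta(1+it)|^{\pm1}\ll(\log t)^{2/3+o(1)}$; the $\log\log$ bounds follow from RH, or hold only for \emph{most} $t$. The paper uses Ramachandra's large-value theorem (Lemma \ref{set_A}) to obtain $|\zeta(1+2it)|^{\pm1}\ll\log\log T$ only \emph{outside} an exceptional set $\A(\widehat T)$ of small measure, and must then verify that excising the corresponding set $\U(T)$ does not destroy the fourth-moment lower bound --- which needs the extra pointwise input $\zeta(\h+it)\ll|t|^{1/6+\ep}$ in \eqref{int_UT}. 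Without such a mechanism, your factor $|\zeta(1\pm2\ep+2\ii t)|^{2(M+1)}$ can, for the unfavourable sign of $M+1$, only be bounded below by a negative power of $\log t$, which ruins the exponent of $\log\log x$ in the theorem. The same applies to your comparison of $\zeta(1\mp2\ep+2\ii t)$ with $\zeta(1+2\ii t)$ uniformly over $\log t\asymp1/\ep$: you correctly identify this as ``the main obstacle'', but that is an admission that the key estimate is asserted rather than proved. Two smaller points: the $\sigma$-uniform fourth-moment lower bound off the critical line is also only asserted (the paper avoids it by working at $\sigma=\h$ exactly, where \eqref{fourth} is classical); and the parenthetical ``shortcut'' for $M\ge3$ is unjustified, since the residue of $F_H(s)x^s/s$ at $s=\h$ is not known to be a genuine secondary main term --- one cannot conclude $\r_{f_H}(x)=\Omega(\sqrt x(\log x)^{M-1})$ without first showing that what remains after subtracting that residue is of smaller order.
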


\subsection{Remarks.} For the simplest case \eqref{F_one}, it is
immediate that $f_1(n)=\Omega(\sqrt{n})$, hence also
$\r_{f_1}(x)=\Omega(\sqrt{x})$. The achievement of the elaborate
analysis to come is thus only an improvement by a logarithmic
factor. On the other hand, it is easy to see that, e.g.~for the
first example mentioned, it follows that $f^*(n)\ll\sqrt{n}(\log\log
n)^3$, hence our $\Omega$-bound cannot be deduced by consideration
of the individual values of the arithmetic function involved.

The situation may be compared with the sphere problem in $\RR^3$: If
$r_3(n)$ denotes the number of ways to write the positive integer
$n$ as a sum of three squares, then \be \notag \sum_{n\le x} r_3(n)
= \frac{4\pi}{3}\,x^{3/2}+\Omega\br{(x \log x)^{1/2}} \ee is the
best Omega-result known to date \cite{Szegoe}, while, by the very
same asymptotics, $r_3(n)=\Omega(\sqrt{n})$.

The method of proof which turned out to be appropriate in this
problem goes back to ideas due to Ramachandra \cite{Rama} and
Balasubramanian, Ramachandra \& Subbarao \cite{BalaRamaSub}. They
have been worked out in papers by Schinzel \cite{Schinzel},
K\"{u}hleitner \cite{MKtriples}, and the authors \cite{KNclass}.
However, in the present situation certain adaptions are necessary:
On the one hand, $f_H(n)$ is not as small as $O(n^\epsilon)$. On the
other hand, in the last step it will be advantageous to use special
properties of the Riemann zeta-function, instead of a general
theorem of Ramachandra's on Dirichlet \hbox{series \cite{Rama}.} It
should be mentioned that also in the case $M=0$, when the generating
function does not contain any factor involving $\zeta(2s)$, the
present approach seems to give a better result than Soundararajan's
method \cite{Sound} which up to date was most successful in the
divisor and circle problems.

\section{Preliminaries}
\subsection{} First of all, we can restrict our analysis to the case
that $H(s)=1$ identically, i.e., to \be\label{Ff} F(s) =
\sum_{n=1}^\infty \frac{f(n)}{n^s} =
\zeta^2(s)\zeta(2s-1)\zeta^M(2s)\qquad(\Re s>1)\,.\ee In fact,
assume that for some $f_H\in\mathfrak{C}$ and arbitrarily small
$c_0>0$, \be \abs{\r_{f_H}(x)}\le c_0\,\frac{\sqrt{x}(\log
x)^2}{(\log\log x)^{|M+1|}} \qquad(x\ge x_0)\,. \ee Let $$
\frac{1}{H(s)}=\sum_{n=1}^\infty h(n)n^{-s} \qquad \br{\Re
s>\frac{1}{3}}\,,$$ where the series converges absolutely for $\Re
s>\frac{1}{3}$. Since $f=f_H*h$, where $*$ denotes the convolution
of arithmetic functions, it readily follows that, for $x$
large\footnote{It is clear by the asymptotics with $O$-terms cited,
that the main term in this calculation amounts to
$\Res_{s=1}\br{F(s)\frac{x^s}{s}}$.}, \be \ba \sum_{n\le x} f(n) &=
\sum_{k\le x}h(k)\sum_{m\le x/k}f_H(m) = \sum_{k\le
x}h(k)\br{\frac{x}{k}\,p_{H}\br{\log\br{\frac{x}{k}}} +
\r_{f_H}\br{\frac{x}{k}}} \\ &=
\Res_{s=1}\br{F(s)\frac{x^s}{s}}+\sum_{k>x}h(k)\,\frac{x}{k}\,p_{H}\br{\log\br{\frac{x}{k}}}
+ \sum_{k\le x}h(k) \r_{f_H}\br{\frac{x}{k}}\,. \ea \ee From this it
is immediate that $$ \r_f(x) \ll \frac{c_0\,\sqrt{x} (\log
x)^2}{(\log\log x)^{|M+1|}}\,, $$ which yields a contradiction for
$c_0$ sufficiently small, provided that the Theorem has been
established for the case of \eqref{Ff}.

\subsection{} The assertion will be an easy consequence of the
following integral mean result.

\begin{prop} \label{prop_main} There exist
positive constants $B$ and $C_0$ with the property that \be
\label{prop} \Int_T^\infty \frac{|\r_{f}(u)|^2}{u^2}\,e^{-u/T^B}\,
\d u \ge \frac{C_0\,(\log T)^5}{(\log\log T)^{|2M+2|}} \ee for all
$T$ sufficiently large. \end{prop}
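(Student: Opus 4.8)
The plan is to detect the oscillation of $\r_f(u)$ by an integral transform that acts like a smoothed Perron/Mellin inversion and turns the size of $\r_f$ into a statement about the analytic behaviour of $F(s)$ near the line $\Re s=\tfrac12$, where the factor $\zeta(2s-1)$ contributes a pole at $s=1$ already removed by the main term, but $\zeta^2(s)$ together with $\zeta^M(2s)$ forces genuine size. Concretely, I would start from the identity, valid for suitable $w$,
\be\label{pp1}
\Int_0^\infty \r_f(u)\,u^{-w-1}\,\d u \;=\; -\frac{F(w)}{w} \;+\; (\text{terms from the residue at }s=1),
\ee
obtained by inserting \eqref{Ass_fH} and integrating term by term; the right-hand side is a meromorphic function that, after subtracting the explicit main-term contribution, is regular at $w=1$ but inherits from $\zeta^2(w)$ a double pole structure along $\Re w=\tfrac12$ once one passes to $2w-1$, i.e.\ the relevant growth is governed by $\zeta^2(2s-1)$-type behaviour near $s=1$. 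The upshot I want is a lower bound for a weighted second moment: apply Parseval/Plancherel on the line $\Re w=c$ for $c$ slightly bigger than $1$ to the function $u\mapsto \r_f(u)u^{-1}e^{-u/(2T^B)}$, whose Mellin transform is an explicit convolution of $\Gamma$-factors against $F(w)/w$, and then \emph{shift the contour} down to $\Re w=\tfrac12+\delta$. The main contribution then comes from the part of the integrand built out of $\zeta^2(2w-1)\zeta^M(4w-2)$ evaluated near $w=1$ after rescaling — i.e.\ from the triple-pole region transported to the critical line — and this is precisely where the $(\log T)^5/(\log\log T)^{|2M+2|}$ shape must emerge.

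The key steps, in order, are: (1) write the smoothed Mellin transform $G(w):=\int_0^\infty \r_f(u)u^{-w-1}e^{-u/T^B}\,\d u$ as an absolutely convergent integral for $\Re w>1$ and express it via \eqref{pp1} as a $\Gamma$-smoothing of $-F(w)/w$ minus a completely explicit main-term piece, the latter being $O(T^{-\eta})$-small on the relevant vertical line; (2) apply Plancherel in the form
\be\label{pp2}
\Int_T^\infty \frac{|\r_f(u)|^2}{u^2}\,e^{-u/T^B}\,\d u \;\ge\; \frac{1}{2\pi}\Int_{-\infty}^\infty \Bigl| G\bigl(\tfrac12+\delta+\ii t\bigr)\Bigr|^2\,w(t)\,\d t \;-\;(\text{error}),
\ee
where the cutoff ``$u\ge T$'' and the truncation to a bounded $t$-range cost only admissible errors because of the $e^{-u/T^B}$ factor on one side and polynomial decay of the $\Gamma$-factors on the other; (3) on the segment $|t|\le (\log T)^{A}$, say, localize further to the tiny window where $|2w-2|\asymp 1/\log T$, i.e.\ $w$ within $O(1/\log T)$ of $1$ along the line $\Re w=\tfrac12+\delta$ with $\delta\asymp 1/\log T$, and there use that $\zeta(2w-1)\asymp 1/(2w-2)\asymp \log T$ and $|\zeta(4w-2)|^{M}\asymp (\log\log T)^{-M}$ or so, more carefully $\asymp (\log T)^{\mp\cdots}$ — this requires the classical bounds $\zeta(1+\sigma+\ii\tau)\ll \log(|\tau|+2)$ and $|\zeta(1+\sigma+\ii\tau)|^{-1}\ll \log(|\tau|+2)$ and, for the doubly-logarithmic refinement, the finer behaviour $\zeta(1+1/\log T)\asymp \log T$ versus $\zeta(2+\cdots)=O(1)$ so that the $\zeta^M(4w-2)$ factor is $\asymp 1$ but the bookkeeping of $\log\log T$ enters through how wide the ``resonant'' window can be taken while keeping $|\zeta|$ of the right size; (4) integrate the resulting lower bound $|G|^2\gg (\log T)^{2+2} \times (\text{width})$ over the window of length $\asymp (\log T)/\!\log\log T$ in a suitably renormalized variable, yielding the claimed $(\log T)^5/(\log\log T)^{|2M+2|}$.

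The hard part will be step (3)–(4): extracting the \emph{exact} powers of $\log T$ and $\log\log T$. A straightforward residue/stationary-phase count gives a lower bound of the right order in $\log T$ but the precise exponent $|2M+2|$ on $\log\log T$ is delicate because it comes from the interplay between the double zero/pole of $\zeta^2$ at the transported point and the $M$-th power of $\zeta(2\cdot)$ — one must choose the smoothing parameter (the $\ii t$-window and $\delta$) so that along it $|\zeta(1+2\delta+2\ii t)|^{2}\cdot|\zeta(1+4\delta+4\ii t)|^{2M}$ is simultaneously as large as possible and roughly constant, and proving a matching \emph{lower} bound (not just an upper bound) for such a product on a set of $t$ of the right measure is the crux. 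I expect this to require a Diophantine-approximation / pigeonhole argument in the spirit of Ramachandra and of Balasubramanian--Ramachandra--Subbarao, mentioned in the introduction, to produce a long run of $t$ on which $\zeta(1+\cdots+\ii t)$ behaves like $\zeta(1+\cdots)$ in modulus; once that resonant set is in hand, steps (1), (2) and the final integration \eqref{pp2} are routine. The reduction to $H\equiv 1$ is already done in the Preliminaries, so throughout one works only with $F(s)=\zeta^2(s)\zeta(2s-1)\zeta^M(2s)$, and at the very end Proposition~\ref{prop_main} follows by reading off the constants $B$ and $C_0$ from the above chain of inequalities.
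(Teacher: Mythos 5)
Your overall instinct --- converting the mean square of $\r_f$ into a statement about $F(s)$ on the critical line --- matches the paper's strategy, but two of your key steps fail as stated. First, your Plancherel step (2) is not available: Plancherel gives an \emph{equality} between $\Int_0^\infty |\r_f(u)|^2u^{-2}e^{-2u/T^B}\,\d u$ (the integral over \emph{all} of $(0,\infty)$) and the mean square of $G$ on a vertical line, so to isolate $\Int_T^\infty$ you must subtract $\Int_0^T|\r_f(u)|^2u^{-2}\,\d u$, which is of polynomial size in $T$ (the known bound $\r_f(u)\ll u^{2/3+\epsilon}$ only gives $\ll T^{1/3+\epsilon}$, and in any case $\r_f(u)=\Omega(\sqrt u)$); this swamps the target $(\log T)^5$ and enters with the wrong sign, so no useful lower bound survives. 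The paper circumvents exactly this difficulty by working with $F(\h+it)/(\h+it)$ rather than a Mellin transform of $\r_f$ alone: after a smoothed Perron formula and a splitting of the Dirichlet series at a well-chosen $T^*\in[T,2T]$, the initial segment $\sum_{n\le T^*}f(n)n^{-s}e^{-n/y}$ is rendered harmless by the prefactor $|s|^{-2}\ll T^{-9/5}$ (using $t\ge T^{9/10}$) combined with the Montgomery--Vaughan mean value theorem, and only the tail, which is genuinely expressible through $\r_f(u)$ for $u>T$, survives. Your proposal has no analogue of this mechanism.

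Second, your steps (3)--(4) rest on a localization that cannot occur: on the line $\Re w=\h+\delta$ with $\delta$ small one has $|2w-2|\ge 1-2\delta\asymp1$, so there is no window on that line where $\zeta(2w-1)\asymp\log T$; the pole of $\zeta(2s-1)$ at $s=1$ plays no role on the half-line. In the paper the powers of $\log T$ come from a different source entirely: the functional equation gives $|\zeta(-2it)|\asymp|t|^{1/2}\,|\zeta(1+2it)|$, hence $|F(\h+it)|\asymp|t|^{1/2}\,|\zeta(\h+it)|^2\,|\zeta(1+2it)|^{M+1}$, and the lower bound $(\log T)^5$ is the fourth moment of $\zeta(\h+it)$ integrated against $\d t/t$ over $[T^{9/10},T]$. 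The factor $(\log\log T)^{|2M+2|}$ is then exactly the two-sided bound $|\zeta(1+2it)|^{\pm1}\ll\log\log T$ valid outside a small exceptional set (Ramachandra's large value theorem, quoted as Lemma 1 and built into the set $\U(T)$), not a count of the width of a resonant window. Without the functional-equation step, the fourth-moment input, and the exceptional-set construction, the exponents $5$ and $|2M+2|$ cannot be recovered from your outline.
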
 The conclusion from this result
to Theorem 1 is easy and has many analogues in the literature; see
\cite{BalaRamaSub}, \cite{MKtriples}, \cite{KNclass}. Nevertheless,
we supply the details for convenience of the reader. Assume that for
any arbitrarily small constant $c_0>0$, there exists $u_0$ so that
\be \label{contra} \abs{\r_f(u)}\le \frac{c_0\,\sqrt{u} (\log
u)^2}{(\log\log u)^{|M+1|}} \qquad \hbox{for all}\quad u\ge u_0\,.
\ee Then, for $B$ as in the Proposition, and $T$ sufficiently large,
\be \notag \ba & \Int_T^\infty
\frac{|\r_{f}(u)|^2}{u^2}\,e^{-u/T^B}\, \d u \\ &\le c_0^2
\Int_T^\infty \inv{u}\, e^{-u/T^B} \frac{(\log u)^4}{(\log\log
u)^{|2M+2|}}\, \d u = c_0^2\br{\Int_T^{T^B}+ \Int_{T^B}^\infty} \\ &
\ll \frac{c_0^2\,(\log T)^4}{(\log\log
T)^{|2M+2|}}\Int_T^{T^B}\frac{\d u}{u} + c_0^2 \Int_1^\infty
\frac{e^{-v}(\log(T^B v))^4}{(\log\log(T^B v))^{|2M+2|}}\,\frac{\d
v}{v} \\ & \ll \frac{c_0^2\,(\log T)^5}{(\log\log T)^{|2M+2|}}\,.
\ea \ee For $c_0$ sufficiently small this contradicts \eqref{prop}.

\subsection{} One special feature of the present situation - as opposed
to the cases considered in \cite{BalaRamaSub}, \cite{Schinzel},
\cite{MKtriples}, \cite{KNclass} - is that the function $f(n)$ is
not "small", i.e., not $O(n^\epsilon)$ for each $\epsilon>0$.
However, if $f$ is generated by $\zeta^2(s) \zeta(2s-1)
(\zeta(2s))^M$, call $\hat f$ the arithmetic function generated by
$\zeta^2(s) \zeta(2s-1) (\zeta(2s))^{|M|}$. Then,
$$ |f(n)|\le \hat{f}(n) = \sum_{m_1m_2(m_3
k_1\dots k_{|M|})^2=n}\hskip-1cm{ m_3 } \ll n^{1/2+\epsilon}\,.
$$
Hence, using a trivial version of \eqref{Ass_fH}, applied to $\hat
f$, and summation by parts, it follows that \be \label{f_square}
\sum_{n\le X} n^\beta\,(f(n))^2 \ll X^{3/2+\beta+\epsilon} \ee for
each fixed $\beta>-\frac{3}{2}$, large $X$, and any $\epsilon>0$.

\subsection{} The following auxiliary result is classic and provides
some information that the factor of $F(s)$ involving $\zeta(2s)$ is
"not too harmful" close to the critical line.

\begin{lemma} \label{set_A} Let $\epsilon_0>0$ be a sufficiently small
constant. Then, for $\widehat{T}$ a sufficiently large real
parameter, there exists a set
$\A(\widehat{T})\subset[\widehat{T},2\widehat{T}]$ with the
following properties:\smallskip

\noindent{\rm(i)} $\A(\widehat{T})$ is the union of at most
$O(\widehat{T}^{\epsilon_0})$ open intervals, with a total length of
$O(\widehat{T}^{\epsilon_0})$.\smallskip

\noindent{\rm(ii)} $$
\sup_{t\in[\widehat{T},2\widehat{T}]\setminus\A(\widehat{T})}
\abs{\zeta(1+2it)}^{\pm1} \ll \log\log\widehat{T} $$

\noindent{\rm(iii)} There exist a real number $\delta(\epsilon_0)>0$
and a certain constant $C$ so that $$ \abs{\zeta(2s)}^{\pm1} \ll
\widehat{T}^C $$ uniformly in $\Re s\ge\h-\delta(\epsilon_0)$, $\Im
s\in[\widehat{T},2\widehat{T}]\setminus\A(\widehat{T})$.

\end{lemma}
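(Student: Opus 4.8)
The strategy will be to take for $\A(\widehat T)$ a union of short intervals centred at the halved ordinates of those zeros of $\zeta$ lying near the line $\Re s=1$, keep their number under control by a zero-density estimate, and then read off (ii) and (iii) from standard facts about $\zeta$ in the zero-free neighbourhoods so produced. Concretely, I would fix a sufficiently small constant $\delta=\delta(\epsilon_0)>0$ and a cut-off $x$ equal to a fixed power of $\log\widehat T$; let $\mathcal Z$ denote the set of all zeros $\rho=\beta+i\gamma$ of $\zeta$ with $\gamma\in[\widehat T,5\widehat T]$ and $\beta\ge 1-4\delta$, and put $\A(\widehat T):=[\widehat T,2\widehat T]\cap\bigcup_{\rho\in\mathcal Z}\bigl(\tfrac\gamma2-x,\ \tfrac\gamma2+x\bigr)$. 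A zero-density bound of the classical shape $\#\{\rho=\beta+i\gamma:\zeta(\rho)=0,\ \beta\ge\sigma,\ 0<\gamma\le V\}\ll V^{c(1-\sigma)}(\log V)^{C_1}$ gives $\#\mathcal Z\ll\widehat T^{4c\delta}(\log\widehat T)^{C_1}$, which is $\ll\widehat T^{\epsilon_0/2}$ as soon as $4c\delta<\epsilon_0/2$ and $\widehat T$ is large; since each of these intervals has length $2x=O(\widehat T^{\epsilon_0/2})$, assertion (i) follows.

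For (iii), fix $t\in[\widehat T,2\widehat T]\setminus\A(\widehat T)$ and $s$ with $\Re s\ge\h-\delta$, so that $w:=2s$ has $\Re w\ge 1-2\delta$, $\Im w=2t$. The bound $\abs{\zeta(2s)}\ll\widehat T^C$ is the convexity estimate for $\zeta$. For the reciprocal, note that $t\notin\A(\widehat T)$ forces $\zeta$ to be zero-free on the segment $\{u+2it:\,1-2\delta\le u\le 2\}$; writing $\log\zeta(w)=\log\zeta(2+2it)-\int_{\Re w}^{2}\tfrac{\zeta'}{\zeta}(u+2it)\,\d u$ and inserting the standard expansion $\tfrac{\zeta'}{\zeta}(u+2it)=\sum_{|\gamma-2t|\le1}(u+2it-\rho)^{-1}+O(\log\widehat T)$ — in which every zero $\rho$ that occurs has $\beta<1-4\delta$ and hence $|u+2it-\rho|\ge u-\beta\gg\delta$ — one gets $\tfrac{\zeta'}{\zeta}(u+2it)\ll_\delta\log\widehat T$ for $1-2\delta\le u\le2$, so $\abs{\log\zeta(2s)}\ll_\delta\log\widehat T$ and therefore $\abs{\zeta(2s)}^{\pm1}=\exp\bigl(\pm\Re\log\zeta(2s)\bigr)\ll\widehat T^{C}$ with $C=C(\epsilon_0)$.

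The delicate point is (ii). For $t\notin\A(\widehat T)$ there is, for $\widehat T$ large, no zero $\rho$ with $\beta\ge 1-c_1/\log x$ and $|\gamma-2t|\le 2x$ (since $1-c_1/\log x>1-4\delta$ then), and in the presence of such a zero-free window the classical truncated formula for $\log\zeta$ (Granville--Soundararajan; cf.\ Titchmarsh) gives $\log\zeta(1+2it)=\sum_{2\le n\le x}\Lambda(n)\,n^{-1-2it}(\log n)^{-1}+O(1/\log x)$. Hence $\abs{\log\zeta(1+2it)}\le\sum_{p\le x}p^{-1}+O(1)=\log\log x+O(1)=\log\log\log\widehat T+O(1)$, so $\abs{\zeta(1+2it)}^{\pm1}=\exp\bigl(\pm\Re\log\zeta(1+2it)\bigr)\le\exp\bigl(\log\log\log\widehat T+O(1)\bigr)\ll\log\log\widehat T$. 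This is the step requiring care, and the main obstacle: to land at $\log\log\widehat T$ rather than a mere power of $\log\widehat T$ one must take $x$ only a power of $\log\widehat T$ (so that $\log\log x$ has order $\log\log\log\widehat T$), and the truncated formula is admissible for so short a cut-off precisely because the zeros it must steer clear of have $1-\beta$ bounded below by the fixed constant $\approx c_1/\log x$ — rather, one is content to discard all zeros with $\beta\ge1-4\delta$, of which there are only $O(\widehat T^{4c\delta})$, so that the $t$-intervals removed still total at most $\widehat T^{\epsilon_0}$.
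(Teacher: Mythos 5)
The paper does not actually prove this lemma: it simply cites Ramachandra [Rama, Theorem 1], Ramachandra--Sankaranarayanan [RamaSanka, Lemma 3.2], and the authors' own earlier paper [KNDioph, Lemma and formula (2.6)], where a self-contained proof is given. Your sketch is, in substance, a correct reconstruction of the standard argument underlying those references: remove short intervals around the halved ordinates of zeros with $\beta\ge 1-4\delta$, control their number by a zero-density estimate of the shape $N(\sigma,V)\ll V^{c(1-\sigma)}(\log V)^{C_1}$ to get (i), use the partial-fraction expansion of $\zeta'/\zeta$ over the resulting zero-free segment to get (iii), and use the truncated Dirichlet-series approximation $\log\zeta(1+2it)=\sum_{n\le x}\Lambda(n)n^{-1-2it}(\log n)^{-1}+O(1)$ with $x$ a power of $\log\widehat{T}$, together with Mertens' theorem, to get the crucial $\log\log$ bound in (ii). Two quantitative points deserve to be made explicit rather than left implicit. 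First, the error term in the truncated formula for $\log\zeta(1+2it)$ under a zero-free rectangle of fixed width $4\delta$ and height $\asymp x$ is of the shape $O\bigl(\delta^{-1}(\log\widehat{T})\,x^{-c'\delta}\bigr)+O(1/\log x)$, so the exponent $A$ in $x=(\log\widehat{T})^A$ must be taken large in terms of $\delta(\epsilon_0)$ for this to be $O(1)$; this is compatible with your conclusion $\log\log x=\log\log\log\widehat{T}+O(1)$, but it should be stated. Second, the constant $C$ in (iii) produced by your argument depends on $\epsilon_0$ (through $\delta$), which is consistent with how the lemma is used in the paper but is worth recording. With these caveats your proposal is sound, and it is more informative than the paper's treatment, which delegates the entire proof to the literature.
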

\begin{proof} This result is contained in \cite[Theorem
1]{Rama} and \cite[Lemma 3.2]{RamaSanka}. An extension to Dedekind
zeta-functions, along with a neat proof, was given in \cite[Lemma
and formula (2.6)]{KNDioph}.
\end{proof}

\subsection{} We conclude this section by quoting a deep and
celebrated result due to Montgomery and Vaughan which provides a
mean-square bound for Dirichlet polynomials.

\begin{lemma} For an
arbitrary sequence of complex numbers $(\gamma_n)_{n=1}^\infty$ with
the property that $\sum_{n=1}^\infty n\,|\gamma_n|^2 $ converges,
and a large real parameter $X$,
$$   \Int_0^X \abs{\sum_{n=2}^\infty \gamma_n (n+u)^{-it} }^2 \d t =
\sum_{n=2}^\infty |\gamma_n|^2 \br{X + O(n)}\,,     $$ uniformly in
$-1\le u\le1$, and
$$   \Int_0^X \abs{\sum_{n=1}^\infty \gamma_n n^{-it} }^2 \d t =
 \sum_{n=1}^\infty |\gamma_n|^2 \br{X + O(n)}\,. $$ \end{lemma}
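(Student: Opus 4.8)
The plan is to reduce both identities to the classical Montgomery--Vaughan mean value theorem for generalized Dirichlet polynomials $\sum_n a_n e^{-it\lambda_n}$ with distinct real frequencies $\lambda_n$, and then to pass from finite partial sums to the infinite series by an $L^2$ argument. Writing $n^{-it}=e^{-it\log n}$ and $(n+u)^{-it}=e^{-it\log(n+u)}$, the relevant frequencies are $\lambda_n=\log n$ for the second identity and $\lambda_n=\log(n+u)$ for the first. First I would record the finite form. Denoting by $S_N(t)$ the partial sum up to $n=N$, expanding $\abs{S_N(t)}^2$ and integrating termwise over $[0,X]$ yields the diagonal contribution $X\sum_{n\le N}\abs{\gamma_n}^2$ together with the off-diagonal double sum $\sum_{m\neq n}\gamma_m\overline{\gamma_n}\Int_0^X e^{-it(\lambda_m-\lambda_n)}\d t$.

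The heart of the finite estimate is the bound for this off-diagonal sum. Evaluating the inner integral splits it into the ``$-1$'' part and the ``$e^{-iX(\lambda_m-\lambda_n)}$'' part, each of the shape $\sum_{m\neq n}\frac{x_m\overline{x_n}}{\lambda_m-\lambda_n}$ with $\abs{x_n}=\abs{\gamma_n}$, the extra unimodular factor $e^{-iX\lambda_n}$ being harmlessly absorbed into $x_n$. To each I would apply the generalized Hilbert inequality of Montgomery and Vaughan, namely $\abs{\sum_{m\neq n}\frac{x_m\overline{x_n}}{\lambda_m-\lambda_n}}\ll\sum_n\abs{x_n}^2\delta_n^{-1}$ with an absolute implied constant, where $\delta_n=\min_{m\neq n}\abs{\lambda_m-\lambda_n}$. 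This delivers the finite identity $\Int_0^X\abs{S_N}^2\d t=X\sum_{n\le N}\abs{\gamma_n}^2+O\br{\sum_{n\le N}\abs{\gamma_n}^2\delta_n^{-1}}$.

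The second step converts $\delta_n^{-1}$ into the stated weight $n$. For $\lambda_n=\log n$ the consecutive gaps are $\log(1+1/n)$ and $\log(1+1/(n-1))$, both $\asymp 1/n$, so $\delta_n\asymp 1/n$ and $\delta_n^{-1}\ll n$. For the shifted frequencies $\lambda_n=\log(n+u)$ with $n\ge 2$ (so that $n+u\ge 1$ and the logarithm is defined throughout $u\in[-1,1]$) the gap to a neighbour is $\log(1+1/(n+u))$; since $n-1\le n+u\le n+1$, this is again $\asymp 1/n$ \emph{uniformly} in $u$. Thus the finite identity becomes $\Int_0^X\abs{S_N}^2\d t=\sum_{n\le N}\abs{\gamma_n}^2(X+O(n))$ with an absolute constant, valid uniformly in $-1\le u\le 1$. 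The same spacing bound applies verbatim to any block $\sum_{M<n\le N}$, since deleting terms only enlarges the minimal gaps.

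The final step is the passage $N\to\infty$. Applying the finite identity to the block $S_N-S_M$ gives $\Int_0^X\abs{S_N-S_M}^2\d t=\sum_{M<n\le N}\abs{\gamma_n}^2(X+O(n))$, which tends to $0$ as $M,N\to\infty$ precisely because the hypothesis $\sum_n n\abs{\gamma_n}^2<\infty$ (and hence $\sum_n\abs{\gamma_n}^2<\infty$) makes both pieces on the right a convergent tail. Therefore $(S_N)$ is a Cauchy sequence in $L^2[0,X]$, converging to the function $S$ represented by the series, and $\Int_0^X\abs{S_N}^2\d t\to\Int_0^X\abs{S}^2\d t$; on the right-hand side $\sum_{n\le N}\abs{\gamma_n}^2X$ and $\sum_{n\le N}O(n)\abs{\gamma_n}^2$ both converge, so letting $N\to\infty$ produces the two asserted formulas. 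I expect the main obstacle to be exactly the off-diagonal bound of the first step: the generalized Hilbert inequality with the sharp per-frequency weight $\delta_n^{-1}$ (rather than the crude uniform $\min_n\delta_n^{-1}$) is the genuinely deep input, and I would invoke it as a black box rather than reprove its delicate kernel argument. The remaining subtleties are comparatively routine: securing uniformity in $u$, which the spacing estimate handles, and justifying the interchange of limit and integral through $L^2$-completeness rather than any (possibly unavailable) pointwise or absolute convergence of the series $\sum_n\gamma_n n^{-it}$.
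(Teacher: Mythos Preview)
Your proposal is correct and follows essentially the same route as the paper: both rest on the Montgomery--Vaughan mean value theorem, the paper simply citing it as an immediate consequence of \cite[Corollary~2, formula~(1.9)]{MV}, while you unpack the derivation from the underlying generalized Hilbert inequality, verify the spacing $\delta_n^{-1}\ll n$ uniformly in $u$, and justify the passage to the infinite series via $L^2$-completeness.
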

\begin{proof} This is an immediate consequence of Montgomery and Vaughan \cite[Corollary 2, formula
(1.9)]{MV}. \end{proof}

\section{Proof of Proposition \ref{prop_main}}

\subsection{} For positive real $T$ sufficiently large, we construct a
set $\U(T)$ on the real line as follows: Let $J:=\left[\frac{\log
T}{10\log2} \right]$, then $2^{-J}T\asymp T^{9/10}$. For
$j=1,\dots,J$, set $T_j:=2^{-j}T$. For some appropriate small
$\epsilon_0>0$, consider the sets $\A(T_j)$ furnished by Lemma
\ref{set_A}. Let $$ \A(T_j)=\bigcup_{i\in\I_j}]a_i^{(j)},b_i^{(j)}[
$$ be the decomposition of $\A(T_j)$ into
$\#(\I_j)=O(T^{\epsilon_0})$ open intervals of total length
$O(T^{\epsilon_0})$. Then we define \be \label{Def_UT} \U(T):=
\bigcup_{j=1}^J \bigcup_{i\in\I_j}]a_i^{(j)}-(\log
T)^2,b_i^{(j)}+(\log T)^2[\,. \ee By construction, it is clear that
$\U(T)$ consists of $O(T^{2\epsilon_0})$ open intervals of total
length $O(T^{2\epsilon_0})$.

\subsection{} We set $y=T^B$, with a suitably large
positive constant $B$, for throughout what follows. It suffices to
consider those values of $T$ for which \be\label{log_six}
 \Int_T^\infty \frac{|\r_f(u)|^2}{u^{2}}e^{-u/y}\d u \leq
(\log{T})^6\,. \ee (Otherwise the assertion of Proposition 1 is
obvious.) In this subsection, our aim is to deduce the asymptotic
representation \be \label{exp_series}  F(s) = \sum_{n=1}^\infty
\frac{f(n)}{n^s}\,e^{-n/y} \ +\ O(1)\,, \ee for \be\label{cond_s}
s=\h+it\,,\quad t\in[2^{-J}T,T]\setminus\U(T)\,, \ee which will be
assumed throughout the sequel.

By a version of Perron's formula (see, e.g.,
\cite[p.~380]{Prachar}), \be \label{Perron} \sum_{n=1}^\infty
\frac{f(n)}{n^s}e^{-n/y} = \frac{1}{2\pi i}
\Int_{2-i\infty}^{2+i\infty} F(s+w)y^w\Gamma(w) \,\d w\,.\ee We use
Stirling's formula in the crude form \be \Gamma(\sigma+i\tau) \asymp
\exp{\br{-\frac{\pi}{2}\vert \tau\vert}}\,\vert
\tau\vert^{\sigma-1/2}\,, \qquad (|\tau|\to\infty) \ee which holds
uniformly in every strip $\sigma_1\le\sigma\le\sigma_2$. From this
it is an immediate consequence that, for every fixed $k\in\ZZ_+$,
\be \label{Gamma_k} \Gamma^{(k)}(\sigma+i\tau) \ll
\exp{\br{-\frac{\pi}{4}\vert \tau\vert}}\, \qquad (|\tau|\to\infty)
\ee again uniformly in any strip of this kind.
                          \def\lt{(\log T)^2}
It is easy to see that we may break off the part of the integration
line in \eqref{Perron} corresponding to $|w|>(\log T)^2$, with an
error of only $O(1)$: \be \ba &\Int_{2\pm i(\log T)^2}^{2\pm
i\infty} F(s+w)\,y^w\,\Gamma(w)\,\d w  \\ & \ll
T^{2B}\Int_{\lt}^\infty w^{3/2}\,e^{-(\pi/2)w}\,\d w \ll T^{2B}
\,\exp\br{-{\pi\over4} \lt} \ \ll \ 1\,. \ea \ee Next, we replace
the remaining line of integration by a broken line segment $\frak L$
which joins (in this order) $2-i\lt$, $-\delta(\epsilon_0)-i\lt$,
$-\delta(\epsilon_0)+i\lt$, and $2+i\lt$, where $\delta(\epsilon_0)$
has the meaning as in Lemma \ref{set_A}. By Lemma \ref{set_A},
clause (iii), and known upper bounds for the zeta-function, \be
F(s+w)\ll T^{C'}\,, \ee with some positive constant $C'$, as long as
$w$ lies on $\frak L$ and $s$ satisfies \eqref{cond_s}. Therefore,
$$    \Int_{-\delta(\ep_0)-i\lt}^{-\delta(\ep_0)+i\lt} F(s+w) y^w \Gamma(w)
\,\d w \ll T^{C'-B\delta(\ep_0)} \Int_0^\infty
\Gamma(-\delta(\ep_0)+iu)\,\d u \ \ll\ 1 $$ if we have chosen $B\ge
C'/\delta(\ep_0)$, and
$$  \int_{-\delta(\ep_0)\pm i\lt}^{2\pm i\lt} F(s+w) y^w \Gamma(w) \,\d w \ll
T^{C'+2B}(\log T)^3 \exp\br{-{\pi\over2} \lt} \ \ll \ 1\,.  $$ Since
these integrals are bounded, the main contribution to the right-hand
side of \eqref{Perron} comes from the residue of the integrand
$F(s+w)y^w\Gamma(w)$ at $w=0$, which amounts to $F(s)$. This readily
yields \eqref{exp_series}.

\subsection{} It is an easy consequence of \eqref{log_six} that
there exists some $T^*\in[T,2T]$, not an integer, for which \be
\label{T_star_one} \frac{\abs{\r_f(T^*)}e^{-T^*/y}}{\sqrt{T^*}} \ll
(\log T)^3 \ee and \be \label{T_star_two} \inv{y}\Int_{T^*}^\infty
\frac{\abs{\r_f(u)}}{\sqrt{u}} e^{-u/y} \d u \ll (\log T)^3\,.\ee
This can be readily verified following the example of \cite[p.~111,
Lemma 4]{BalaRamaSub}. With this choice of $T^*$, we will split up
the series on the right-hand side of \eqref{exp_series}. In this
subsection, we shall handle \be\label{sum_n_large} \ba &
\sum_{n>T^*} \frac{f(n)}{n^s}e^{-n/y} = \Int_{T^*}^\infty u^{-s}
e^{-u/y}\,\d\br{\sum_{n\le u} f(n)} \\ & = \Int_{T^*}^\infty u^{-s}
e^{-u/y}\,\d\br{u\,p_H(\log u)} + \Int_{T^*}^\infty u^{-s} e^{-u/y}
\,\d\r_f(u) =: I_1 + I_2\,,\ea \ee where we have used Stieltjes
integrals. Integrating by parts, we obtain \be\label{bound_I_two}
\ba I_2 =& \left.\r_f(u) u^{-s} e^{-u/y} \right|_{u=T^*}^{u=\infty}
\\ & +s \Int_{T^*}^\infty u^{-s-1} e^{-u/y} \r_f(u)\d u +\inv{y}
\Int_{T^*}^\infty u^{-s} e^{-u/y} \r_f(u)\d u \\ =& s
\Int_\xi^{\xi+1}\sum_{n>T^*} (n+v)^{-s-1} e^{-(n+v)/y} \r_f(n+v) \d
v + O\br{(\log T)^3}\,. \ea \ee Here $ \xi:= T^*-[T^*]-1 $, and the
bounds \eqref{T_star_one}, \eqref{T_star_two} have been used.

To estimate $I_1$, we have to deal with integrals of the form $$
\Int_{T^*}^\infty u^{-s} (\log u)^r e^{-{u/y}}\,\d u\,, $$ with
$r\in\{0,1,2\}$, and $s$ satisfying \eqref{cond_s}. We write this as
\be \ba &\Int_{0}^\infty u^{-s} (\log u)^r e^{-{u/y}}\,\d u -
\Int_{0}^{T^*} u^{-s} (\log u)^r \,\d u \\ & + \Int_0^{T^*} u^{-s}
(\log u)^r\br{1-e^{-{u/y}}}\,\d u\ =:\ J_1 - J_2 +J_3\,. \ea \ee By
 Taylor expansion,
\be \notag  J_3 \ll {1\over y} \int_0^{T^*} u^{1/2} (\log u)^r\,\d u
\ll T^{-B+3/2+\epsilon} \ll 1\,. \ee Integrating by parts $r$ times,
we infer that $$ J_2 \ll \frac{T^{1/2} (\log T)^r }{ |1-s|} \ll
\frac{T^{1/2} (\log T)^r }{ T^{9/10}} \ll\ 1\,.  $$  Finally,
\be\notag \ba J_1 = &\ y^{1-s} \Int_0^\infty u^{-s}
(\log(uy))^r\,e^{-u}\,\d u
\\ = &\ y^{1-s} \sum_{\rho=0}^r {r\choose\rho} (\log y)^{r-\rho}\,
\Gamma^{(\rho)} (1-s)\ \ll\ 1\,, \ea \ee by \eqref{Gamma_k}.
Altogether this shows that $  I_1 \ll 1  $, hence, together with
\eqref{exp_series}, \eqref{sum_n_large} and \eqref{bound_I_two},
\be\label{Fs_mixed} \ba F(s)&= \sum_{n \leq T^*} \frac{f(n)}{n^s}e^{-n/y}\\
& +\ s\Int_\xi^{\xi+1} \sum_{n>T^*}
\frac{\r_f(n+u)}{(n+u)^{s+1}}e^{-(n+u)/y} \,\d u\ +\
O\br{(\log{T})^3} \ea \ee for $s$ satisfying \eqref{cond_s}.

\subsection{} The next step is to prove that \be \label{mean_square}
\Int_{[2^{-J}T,T]\setminus\U(T)} \abs{\frac{F(\h+it)}{\h+it}}^2 \d t
\ll 1 + \Int_T^\infty \frac{\abs{\r_f(u)}^2}{u^2}\,e^{-u/y}\,\d u\,.
\ee By \eqref{Fs_mixed}, the left-hand side of \eqref{mean_square}
is \be\label{RHS} \ba \ll &\ T^{-9/5} \Int_{2^{-J}T}^T \abs{\sum_{n
\leq T^*} \frac{f(n)}{n^{1/2+it}} e^{-n/y}}^2 \d t \\ & +
\Int_{2^{-J}T}^T \abs{\Int_\xi^{\xi+1} \sum_{n>T^*}
\frac{\r_f(n+u)}{(n+u)^{3/2+it}}e^{-(n+u)/y} \,\d u}^2 \d t\\ & +
O\br{T^{-9/10}(\log T)^6}\,. \ea \ee To bound the first integral
here, we use Lemma 2. In this way, \be\label{first}\ba & T^{-9/5}
\Int_{2^{-J}T}^T
\abs{\sum_{n \leq T^*} \frac{f(n)}{n^{1/2+it}} e^{-n/y}}^2 \d t\\
&\ll T^{-9/5}\sum_{n\le T^*}\frac{(f(n))^2}{n}\br{T+O(n)} \ll
T^{-3/10+\epsilon}\,,\ea \ee for any $\epsilon>0$, by an appeal to
\eqref{f_square}. Similarly, using Cauchy's inequality and Lemma 2
again, we see that the second term of \eqref{RHS} is \be\ba &\le
\Int_\xi^{\xi+1} \Int_0^T \abs{\sum_{n>T^*}
\frac{\r_f(n+u)}{(n+u)^{3/2+it}}e^{-(n+u)/y}}^2\,\d t\,\d u \\ & \ll
\Int_\xi^{\xi+1}\br{ \sum_{n>T^*}
\frac{\abs{\r_f(n+u)}^2}{(n+u)^{3}}e^{-2(n+u)/y}\,(T+O(n))}\,\d u
\\ & \ll
\Int_\xi^{\xi+1}\br{ \sum_{n>T^*}
\frac{\abs{\r_f(n+u)}^2}{(n+u)^{2}}e^{-(n+u)/y}}\,\d u \\ & \le
\Int_T^\infty \frac{\abs{\r_f(u)}^2}{u^2}\,e^{-u/y}\,\d u\,. \ea \ee
Together with \eqref{RHS} and \eqref{first}, this readily yields
\eqref{mean_square}.

\subsection{} In order to complete the proof of Proposition 1, it
remains to show that \be \label{mean_square_lower}
\Int_{[2^{-J}T,T]\setminus\U(T)} \abs{\frac{F(\h+it)}{\h+it}}^2 \d t
\gg \frac{(\log T)^5}{(\log\log T)^{|2M+2|}}\,. \ee In fact, by the
functional equation of the zeta-function (see \cite[p.~95]{TiZet}),
\be\notag \ba \abs{F\br{\h+it}} &=
\abs{\zeta^2\br{\h+it}\zeta(-2it)\br{\zeta\br{1+2it}}^M}\\ & \asymp
\abs{\zeta^2\br{\h+it}\br{\zeta\br{1+2it}}^{M+1}}|t|^{1/2}\ea\ee for
$|t|$ large. Further, by Lemma 1, clause (iii), and the construction
of $\U(T)$ in subsection 3.1, it follows that
$|\zeta(1+2it)|^{\pm1}\ll \log\log T$ for $t\in
[2^{-J}T,T]\setminus\U(T)$. Therefore, \be \label{LL_out}\ba
&\Int_{[2^{-J}T,T]\setminus\U(T)} \abs{\frac{F(\h+it)}{\h+it}}^2 \d
t \\ \gg & (\log\log T)^{-2|M+1|} \Int_{[2^{-J}T,T]\setminus\U(T)}
\abs{\zeta\br{\h+it}}^4\,\frac{\d t}{t}\,. \ea \ee For $t\in\U(T)$,
we use the classic pointwise bound $\zeta(\h+it)\ll|t|^{1/6+\ep}$:
See \cite[Theorem 5.5]{TiZet}. Since the total length of $\U(T)$ is
$O(T^{2\epsilon_0})$, it follows that \be \label{int_UT}
\Int_{\U(T)} \abs{\zeta\br{\h+it}}^4\,\frac{\d t}{t} \ll
T^{-7/30+3\ep_0}\,. \ee On the other hand, by the known asymptotics
for the fourth moment of the zeta-function (see
\cite[p.~129]{IvZet}), \be \label{fourth} \Int_{2^{-J}T}^T
\abs{\zeta\br{\h+it}}^4\,\frac{\d t}{t} \gg (\log T)^5\,. \ee
Combining \eqref{LL_out}, \eqref{int_UT}, and \eqref{fourth}, we
readily establish \eqref{mean_square_lower}. This completes the
proof of Proposition 1 and, by the observation in subsection 2.2,
also that of Theorem 1.


\end{document}